\theoremstyle{plain}
\newtheorem{theorem}{Theorem}[section]
\newtheorem{proposition}[theorem]{Proposition}
\newcommand{\Ob}{\boldsymbol{0}}
\newcommand{\xb}{\boldsymbol{x}}
\newcommand{\yb}{\boldsymbol{y}}
\newcommand{\zb}{\boldsymbol{z}}
\newcommand{\bb}{\boldsymbol{b}}
\newcommand{\db}{\boldsymbol{d}}
\newcommand{\Ab}{\boldsymbol{A}}
\newcommand{\Fb}{\boldsymbol{F}}
\newcommand{\Gb}{\boldsymbol{G}}
\newcommand{\ub}{\boldsymbol{u}}
\newcommand{\Rb}{\boldsymbol{R}}
\newcommand{\Ib}{\boldsymbol{I}}
\newcommand{\eb}{\boldsymbol{e}}
\newcommand{\thetab}{\boldsymbol{\theta}}
\newcommand{\Kb}{\boldsymbol{K}}
\newcommand{\Lc}{\mathcal{L}}
\newcommand{\appropto}{\mathrel{\vcenter{
  \offinterlineskip\halign{\hfil$##$\cr
    \propto\cr\noalign{\kern2pt}\sim\cr\noalign{\kern-2pt}}}}}
\def\@email#1#2{%
 \endgroup
 \patchcmd{\titleblock@produce}
  {\frontmatter@RRAPformat}
  {\frontmatter@RRAPformat{\produce@RRAP{*#1\href{mailto:#2}{#2}}}\frontmatter@RRAPformat}
  {}{}
}%
\begin{document}

\preprint{AIP/123-QED}

\title{The fast committor machine: Interpretable prediction with kernels}

\author{David Aristoff}
\affiliation{Mathematics, Colorado State University, Fort Collins, CO, USA}
\thanks{Author to whom correspondence should be addressed: \url{aristoff@colostate.edu}}
\author{Mats Johnson}
\affiliation{Mathematics, Colorado State University, Fort Collins, CO, USA}
\author{Gideon Simpson}
\affiliation{Mathematics, Drexel University, Philadelphia, PA, USA}%
\author{Robert J. Webber}
\affiliation{Mathematics, University of California San Diego, La Jolla, CA, USA}


\begin{abstract}
In the study of stochastic systems, 
the committor function describes the probability that a system 
starting from an initial configuration $x$ will reach a set $B$ before a set $A$.
This paper introduces an efficient and interpretable algorithm for approximating the committor, called the ``fast committor machine'' (FCM).
The FCM uses simulated trajectory data to build a kernel-based model of the committor.
The kernel function is constructed to emphasize low-dimensional subspaces that optimally describe the $A$ to $B$ transitions.
The coefficients in the kernel model are determined using randomized linear algebra, leading to a runtime that scales linearly in the number of data points.
In numerical experiments involving a triple-well potential and alanine dipeptide, the FCM yields higher accuracy and trains more quickly than a neural network with the same number of parameters.
The FCM is also more interpretable than the neural net.
\end{abstract}

\maketitle

\section{Introduction}

Many physical systems exhibit metastability, which is the tendency to occupy a region $A$ of phase space for a comparatively long time before a quick transition to another region $B$.
Metastability is especially common in molecular dynamics, where states $A$ and $B$ might correspond to the folded and unfolded configurations of a protein.
Metastability also arises in continuum mechanics \cite{newhall_metastability_2017}, biological systems \cite{grafke_numerical_2019}, fluids \cite{jacques-dumas_data-driven_2023}, and the Earth's climate \cite{Lucente_Herbert_Bouchet_2022}.

Scientists can gain insight into metastable systems by numerically simulating and then analyzing the transition paths from $A$ to $B$. 
The transition paths can be simulated by a variety of methods including transition path sampling \cite{bolhuis_transition_2002,rogal_multiple_2008}, forward flux sampling \cite{allen_forward_2006,escobedo_transition_2009}, the string method \cite{e_finite_2005,vanden-eijnden_revisiting_2009}, and adaptive multilevel splitting \cite{cerou_multiple_2011}.
This work assumes that a data set of trajectories has already been simulated, including one or more transitions between $A$ and $B$.
A more detailed discussion of sampling methods appears in the conclusion.

Scientists can study the transitions from $A$ to $B$ by evaluating the (forward) committor function \cite{khoo_solving_2019,Li2019,li_semigroup_2022,evans_computing_2022,jacques-dumas_data-driven_2023,lai_point_2018,lucente_coupling_2021,Lucente_Herbert_Bouchet_2022,berezhkovskii_committors_2019}, which measures the probability that the system starting at state $\xb$ will reach $B$ before $A$, thus making a transition. 
In symbols, the committor is given by
\begin{equation*}
    q^*(\xb) = \mathbb{P}_{\xb}(T_B < T_A),
\end{equation*}
where $T_A$ and $T_B$ are the first hitting times for $A$ and $B$.

To numerically approximate the committor function, the trajectory data needs to be combined with an appropriate 
numerical
method.
Beyond the direct counting of $A$ and $B$ transitions,
there are now various machine learning methods available, including neural networks \cite{khoo_solving_2019,Li2019,li_semigroup_2022}, diffusion maps \cite{evans_computing_2022}, 
Markov state models \cite{pande2010everything,chodera2014markov,husic2018markov},
and other tools \cite{lai_point_2018}.
As shortcomings, these methods can be slow to apply to large data sets (diffusion maps) or difficult to interpret (neural nets).

This work describes a new method called the fast committor machine (FCM) that is both efficient and interpretable.
The FCM approximates the committor $q^*$ using a linear combination of kernel functions, as follows:
\begin{itemize}
    \item The kernel function is defined in an interpretable way, as the composition of an exponential kernel with an adaptively chosen linear map.
    The linear map is generated by the recursive feature machine (RFM~\cite{radhakrishnan2022feature}).
    The map emphasizes the low-dimensional subspaces that make the greatest contribution to the gradient of the committor and suppresses other subspaces.

    \item The coefficients in the kernel approximation are optimized using randomly pivoted Cholesky (RPC~\cite{chen_randomly_2023}).
    RPC is a randomized linear algebra approach that processes $N$ data points using just $\mathcal{O}(N)$ floating point operations and $\mathcal{O}(N)$ storage.
\end{itemize}
The RFM and RPC were introduced in 2023--2024 and appear to be new in the chemical physics literature. 
Adapting these techniques to the committor problem requires going beyond the original papers \cite{radhakrishnan2022feature,chen_randomly_2023} and introducing a new functional form based on a variational formulation of the committor.

In summary, the FCM is a method for approximating the committor that is based on an adaptively chosen linear transformation of the phase space.
In numerical experiments,
the method is more accurate and trains more quickly than a neural network with the same number of parameters.
As another advantage, the FCM is interpretable, revealing low-dimensional linear subspaces that optimally describe the transition pathways from $A$ to $B$.

\subsection{Relationship to past work}

Kernel methods are frequently used in chemical machine learning \cite{ferguson2011nonlinear,rohrdanz2011determination,griffiths2023gauche}, especially for calculating force fields \cite{burn2020creating,deringer2021gaussian,burn2022ichor,tom2023calibration}.
Yet the efficiency of the kernel methods depends on finding a suitable distance function between molecular configurations \cite{musil2021physics}.

Two distances are currently being advocated in the chemical physics literature.
Diffusion maps \cite{coifman_diffusion_2008} provide a sophisticated distance that adapts to the nonlinear manifold structure in the input data, and diffusion maps have recently been used to approximate the committor \cite{evans_computing_2022}.
Alternatively, the variational approach to conformational dynamics \cite{noe2013variational} identifies a coordinate transformation that amplifies slowly decorrelating dynamical modes, and VAC is often used to help visualize and construct committor estimates \cite{chen2023discovering,chen2023chasing,schutte2023overcoming}.

As a limitation, diffusion maps and VAC do not adapt to the sets $A$ and $B$ used in the committor definition.
Greater accuracy could be potentially obtained by modifying the distance based on $A$ and $B$.
However, this modification requires a fully adaptive learning strategy that redefines the kernel distance for each committor problem.

The kernel machine learning literature has recently developed a flexible approach to distance construction.
Many authors apply a \emph{linear} transformation to their data before calculating a standard exponential or square exponential kernel \cite{kamada2006support,campsvalls2007hyperspectral}.
The linear map can be adapted to the input data and also to the specific approximation task \cite{wu_learning_2010,trivedi2014consistent}.
A 2024 paper~\cite{radhakrishnan2022feature} introduced the \emph{recursive feature machine} (RFM), which constructs the linear feature map based on the estimated gradients of the target function.
To justify the RFM, the authors compared the linear map with the first layer of a neural net, and they showed empirical and theoretical similarities \cite{radhakrishnan2022feature,beaglehole2023mechanism,beaglehole2024average}.
In experiments, they applied the RFM to RGB images with as many as $96 \times 96$ pixels, hence $96 \times 96 \times 3 = 27,648$ dimensions.
They found that the RFM suppresses irrelevant directions and emphasizes important directions, which is especially important in high dimensions.
For further justification of the RFM's effectiveness, see Appendix~\ref{sec:scaling}.

The RFM provides the template for a kernel-based approximation of the committor.
Yet optimizing the coefficients in the kernel approximation remains computationally demanding.
A naive optimization would require $\mathcal{O}(N^3)$ operations and $\mathcal{O}(N^2)$ storage, which is prohibitively expensive for $N \geq 10^5$ data points.
The high computational cost of kernel methods has been described as a fundamental limitation in the past \cite{unke2021machine}.
Nonetheless, modern strategies in randomized numerical linear algebra are leading to dramatic speed-ups \cite{williams2000using,rudi2017falkon,chen_randomly_2023,díaz2023robust}.

This work applies randomly pivoted Cholesky (RPC~\cite{chen_randomly_2023}) to speed up the kernel optimization and extend the numerical experiments to $N = 10^6$ data points.
To achieve these speed-ups, RPC generates a randomized rank-$r$ approximation of the kernel matrix, where $r$ is a parameter chosen by the user.
The coefficients can be optimized in just $\mathcal{O}(N r^2)$ operations and $\mathcal{O}(N r)$ storage.
For the experiments in Sec.~\ref{sec:experiments}, a constant value of $r = 1000$ yields nearly converged committor results. 

\subsection{Outline for the paper}
The rest of the paper is organized as follows.
Section \ref{sec:context} provides context for the committor approximation task,
Section \ref{sec:new_method} describes the new FCM method for calculating the committor, Section \ref{sec:experiments} presents numerical experiments, and Section \ref{sec:conclusion} offers concluding remarks.
Technical derivations are in
Appendices \ref{sec:alternative} and \ref{sec:scaling}.

\subsection{Assumptions and notation}

The FCM can be applied to any time-reversible Markovian system $\xb(t) \in \mathbb{R}^d$ which has a discrete time step $\tau$ and equilibrium density $\mu$.
See Table~\ref{tab:symbol-definitions} for a list of symbols and definitions.

\begin{table}[t]
    \caption{Definitions of symbols used in this work.}
    \label{tab:symbol-definitions}
    \centering
    \begin{tabular}{c | @{\hspace{1em}}l@{}}
        Symbol & Definition \\ [0.5ex] \hline
        $\xb$, $\xb'$ & system states \\
        $A,B$ & initial and target sets \\
        $\Omega$ &   complement of $A \cup B$ \\
        $\xb(t)$ & underlying stochastic process \\
        $\tau$ &  lag time \\
        $q^*(\xb)$ & exact committor  \\
        $\thetab$ & linear coefficients in kernel approximation \\
        $q_{\thetab}(\xb)$ & estimated committor  \\
        $k_{\bm{M}}(\xb,\xb')$ & kernel function \\
        $\bm{M}$ & scaling matrix \\
        $\Kb$ & kernel matrix \\
        $\Ib$ & \textup{identity matrix} \\
        $(\xb_n,\yb_n)$ & time lagged pairs, $\xb(0)=\xb_n$, $\xb(\tau)=\yb_n$ \\
        $N$ & number of training samples \\
        $\mu$ & equilibrium density \\
        $\rho$ & sampling density \\
        $w_n = \mu(\xb_n)/\rho(\xb_n)$ & weight or likelihood of $\xb_n$ \\
        $1_A, 1_B, 1_\Omega$ & characteristic functions of $A,B,\Omega$ \\
        $\varepsilon$ & bandwidth parameter \\
        $\gamma$ & regularization parameter
    \end{tabular}
\end{table}

\section{Context} \label{sec:context}

The FCM is motivated by a variational principle which states that the committor $q^*(\xb) = \mathbb{P}_{\xb}(T_B < T_A)$ is the unique minimizer of
\begin{equation}
\label{eq:dirichlet}
    \Lc(q) = \mathbb{E}_{\mu} \bigl|q(\bm{x}(0)) - q(\bm{x}(\tau))\bigr|^2,
\end{equation}
among square-integrable functions satisfying $q = 0$ on $A$ and $q = 1$ on $B$.
The expectation $\mathbb{E}_{\mu}$ is an average over all trajectories $\bm{x}(t)$ that are started from the equilibrium density $\bm{x}(0) \sim \mu$ and run forward to an end point $\bm{x}(\tau)$.
The functional \eqref{eq:dirichlet} is  known as the discrete-in-time Dirichlet form, and it appears frequently in Markov chain analysis \cite{doyle1984random,hersch1969brownian,schmuland1999dirichlet,fukushima_dirichlet_2011}.
Here, the Dirichlet form serves as a cost function for committor estimation in machine learning \cite{chen2023discovering,chen2023chasing}.

Frequently,
the Dirichlet form needs to be approximated from data.
Assume the data consists of $N$ pairs $(\xb_n, \yb_n)_{n = 1, 2, \ldots, N}$, where the start point $\xb_n$ is randomly sampled from a density $\rho$ and the end point $\yb_n$ is the result of running the model forward for $\tau$ time units.
Then, a data-driven approximation for $\mathcal{L}(q)$ is the weighted average
\begin{equation} \label{eq:loss}
    L(q)
    = \frac{1}{N} \sum_{n=1}^N w_n \bigl|q(\xb_n)-q(\yb_n)\bigr|^2,
\end{equation}
where the weights are the likelihood ratios between the sampling density $\rho$ and the target density $\mu$:
\begin{equation} \label{eq:weights}
    w_n = \frac{\mu(\bm{x}_n)}{\rho(\bm{x}_n)}.
\end{equation}
The weighted average \eqref{eq:loss} is mathematically justified.
If the data pairs $(\bm{x}_n, \bm{y}_n)_{n = 1, 2, \ldots}$ are ergodic and $\mu$ is \emph{absolutely continuous} with respect to $\rho$:
\begin{equation*}
    \rho(\bm{x}) > 0 \quad \text{whenever} \quad \mu(\bm{x}) > 0,
\end{equation*}
then the estimator \eqref{eq:loss} is unbiased
\begin{equation*}
    \mathbb{E}[L(q)] = \mathcal{L}(q),
\end{equation*}
and the law of large numbers guarantees $L(q) \rightarrow \mathcal{L}(q)$ as $N \rightarrow \infty$.
Nonetheless, consistent with the law of large numbers, the accuracy of the weighted average may depend on generating a large quantity of data.

When the goal is variational minimization of $L(q)$, 
there is an option to use alternative weights
\begin{equation}
\label{eq:alternative}
    w_n' = c \frac{\mu(\bm{x}_n)}{\rho(\bm{x}_n)},
\end{equation}
where the multiplicative constant $c > 0$ can be any positive number based on convenience.
This makes the variational approach applicable even when the likelihood ratios $\mu(\bm{x}_n) / \rho(\bm{x}_n)$ are only known up to a constant multiplicative factor.

\section{New FCM method for calculating the committor} \label{sec:new_method}

This section describes the new method for calculating the committor function $q^*$, called the ``fast committor machine'' (FCM).

\subsection{Form of the committor approximation}

Recall that $(\xb_n, \yb_n)_{n = 1, 2, \ldots, N}$ is a sequence of simulated data pairs.
The FCM is based on a new data-driven committor approximation of the form
\begin{equation} \label{eq:ansatz}
\begin{aligned}
    q_{\bm{\theta}}(\bm{x}) &= 0,
    &\bm{x} \in A, \\
    q_{\bm{\theta}}(\bm{x}) &= 1,
    &\bm{x} \in B, \\
    q_{\bm{\theta}}(\bm{x}) 
    &= 
    \sum_{n=1}^N 
    \theta_n \bigl[ k_{\bm{M}}(\bm{x}_n, \bm{x}) - k_{\bm{M}}(\bm{y}_n, \bm{x})],
    &\bm{x} \in \Omega.
\end{aligned}
\end{equation}
In this definition, $\Omega = (A \cup B)^c$ is the region of unknown committor values, $\bm{\theta} \in \mathbb{R}^N$ is a vector of coefficients, and
\begin{equation}
\label{e:kMkernel}
    k_{\bm{M}}(\bm{x}, \bm{x}') 
    = \begin{cases}
        \exp\bigl(-\tfrac{1}{\varepsilon} \lVert \bm{M}^{1/2} (\bm{x} - \bm{x}') \rVert \bigr), & \bm{x}, \bm{x}' \in \Omega \\
        0, & \text{otherwise}
    \end{cases}
\end{equation}
is a kernel function with a bandwidth parameter $\varepsilon > 0$ and a positive definite scaling matrix $\bm{M} \in \mathbb{R}^{d \times d}$.

The approximation $q_{\bm{\theta}}$ is more parameter-efficient and empirically accurate than other kernel-based parametrizations considered in Appendix \ref{sec:alternative}.
By construction, $q_{\bm{\theta}}$ satisfies the appropriate boundary conditions.
Also, $q_{\bm{\theta}}$ changes flexibly in the interior region $\Omega$.
Note that the kernel $k_{\bm{M}}(\bm{x}, \bm{y})$ is not differentiable at $\bm{x} = \bm{y}$, but this paper uses $\nabla k_{\bm{M}}(\bm{x}, \bm{x}) = \bm{0}$ as the pseudogradient.

The rest of this section describes the approach for optimizing the scaling matrix $\bm{M}$, the coefficient vector $\bm{\theta}$, and the bandwidth $\varepsilon$ in the FCM.

\subsection{Optimization of the scaling matrix $\bm{M}$}

The FCM incorporates a scaling matrix $\bm{M} \in \mathbb{R}^{d \times d}$ that transforms the state variable $\bm{x} \in \mathbb{R}^d$ according to $\bm{x} \mapsto \bm{M}^{1/2} \bm{x}$.
The optimization of this scaling matrix is key to the performance of the FCM.

Since the early 2000s, researchers have suggested a scaling matrix chosen as the inverse covariance matrix of the input data points
\cite{kamada2006support,campsvalls2007hyperspectral}.
This choice of $\bm{M}$ transforms the data points so they become isotropic.
Isotropy can sometimes improve the predictive accuracy, but there is a better approach to optimally select $\bm{M}$ for kernel learning.

Radhakrishnan et al. recently introduced an approach for tuning the scaling matrix called the ``recursive feature machine'' (RFM) \cite{radhakrishnan2022feature}.
The RFM is a generalized kernel method that takes input/output pairs $(\bm{x}_n, b_n = f(\xb_n))_{n = 1, 2, \ldots, N}$ and iteratively learns both a regressor 
\begin{equation} \label{eq:regressor}
    f_{\bm{\theta}}(\xb) = \sum_{n=1}^N \theta_n\, k_{\bm{M}}(\xb_n, \xb)
\end{equation}
and a scaling matrix $M$.
The RFM alternates between two steps:
\begin{itemize}
    \item Update the coefficient vector $\bm{\theta}$ so that $f_{\bm{\theta}}$ minimizes the least-squares loss
    \begin{equation*}
        \thetab \gets \arg \min_{\thetab} \frac{1}{N} \sum_{n = 1}^N |f_{\bm{\theta}}(\bm{x}_n) - b_n|^2
    \end{equation*}
    using the current scaling matrix $\bm{M}$.
    \item Update the scaling matrix 
    \begin{equation} \label{eq:scaling}
        \bm{M} = \sum_{n=1}^N \nabla f_{\thetab}(\xb_n) \nabla f_{\thetab}(\xb_n)^T.
    \end{equation}
    using the current regressor $f_{\bm{\theta}}$. 
\end{itemize}
The method iterates until finding an approximate fixed point for $f_{\bm{\theta}}$ and $\bm{M}$, and typically $3-6$ iterations are enough \cite{radhakrishnan2022feature}.

A complete analysis of the RFM lies beyond the scope of the current work.
However, as an intuitive explanation, 
the linear map improves the ``fit'' between the target function and the approximation.
Specifically, the change-of-basis $\bm{z} = \bm{M}^{1/2} \bm{x}$ causes $f_{\thetab}$ to have isotropic gradients $\nabla_{\bm{z}} f_{\thetab}(\bm{z}_1), \ldots, \nabla_{\bm{z}} f_{\thetab}(\bm{z}_N)$.
Thus, it becomes relatively easy to approximate $f$ using a linear combination of isotropic kernel functions.
See Appendix \ref{sec:scaling} for a proof of the isotropy property of the RFM.

The main contributions of this work are the extension of the RFM to the committor problem, together with an efficient strategy for optimizing the coefficient vector $\bm{\theta}$.
In homage to the original RFM paper \cite{radhakrishnan2022feature}, the new method is called the ``fast committor machine'' (FCM).

Pseudocode for the FCM is provided in Algorithm~\ref{alg:main}.
As a helpful feature, the pseudocode normalizes the scaling matrix $\bm{M}$ by the trace of the covariance matrix
\begin{multline*}
    \operatorname{tr}\, \operatorname{cov}(\{\bm{M}^{1/2}\xb_1, \ldots, \bm{M}^{1/2}\xb_N\}) \\
    = \frac{1}{2N(N - 1)} \sum_{m,n = 1}^N
    \lVert \bm{M}^{1/2}(\bm{x}_m - \bm{x}_n) \rVert^2.
\end{multline*}
This makes it easier to select and interpret the bandwidth parameter $\varepsilon > 0$.
For the experiments in Sec.~\ref{sec:experiments}, setting $\varepsilon = 1$ works well as a default.

\begin{algorithm}[t]
\caption{Fast committor machine} \label{alg:main}
\KwData{Training points: $(\xb_n,\yb_n,w_n)_{n = 1, \ldots, N}$, \;
Solution vector: $\bm{b}$\;
Kernel function: $k$\;
Parameters: bandwidth $\epsilon$,
regularization $\gamma$, approximation rank $r$ (multiple of $10$)\;}

\KwResult{Scaling matrix $\bm{M}$, committor estimate $q_{\bm{\theta}}$\;} 

$\bm{M} \gets \bm{I}$\;
\qquad \textcolor{blue}{\tcp{Initialize $\bm{M}$}}

\For{$t = 1, \ldots, 5$}{

$\bm{M} \gets \bm{M} / \operatorname{tr}\, \operatorname{cov}(\{\bm{M}^{1/2}\xb_1, \ldots, \bm{M}^{1/2}\xb_N\})$\;
\qquad \textcolor{blue}{\tcp{Rescale $\bm{M}$}}
$k_{mn} \gets k_{\bm{M}}\!(\bm{x}_m, \bm{x}_n) - k_{\bm{M}}\!(\bm{x}_m, \bm{y}_n) - k_{\bm{M}}\!(\bm{y}_m, \bm{x}_n)$ \\
\qquad \qquad $+ k_{\bm{M}}\!(\bm{y}_m, \bm{x}_n)$\;
\qquad \textcolor{blue}{\tcp{Repopulate $\Kb$ with kernel $k_{\bm{M}}$}}

$k_{mn} \gets \sqrt{w_m w_n}\, k_{mn}$\;
\qquad \textcolor{blue}{\tcp{Reweight $\bm{K}$}}

$S = \{s_1, \ldots, s_r\} \gets \texttt{RPCholesky}(\bm{K}, r)$\;
\qquad \textcolor{blue}{\tcp{Apply Alg.~\ref{alg:RPCholesky}}}

$\Kb(S,S) \gets \Kb(S,S) + \varepsilon_{\rm mach} \textup{tr}(\bm{K}(S, S)) \Ib$ \;
\qquad \textcolor{blue}{\tcp{Regularize}}

$\bm{\eta} \gets \textup{Solution to }$
\qquad $[\Kb(:,S)^T \Kb(:,S) + \gamma N \Kb(S,S)] \bm{\eta} = \Kb(:,S)^T \bb$\;
\qquad \textcolor{blue}{\tcp{Optimize coefficients}}

$q_{\bm{\theta}}(\bm{x}) \gets \sum_{i=1}^r 
\eta_i \sqrt{w_{s_i}} \bigl[ k_{\bm{M}}(\bm{x}_{s_i}, \bm{x}) - k_{\bm{M}}(\bm{y}_{s_i}, \bm{x})]$\;
\qquad \textcolor{blue}{\tcp{Update $q_{\bm{\theta}}$}}

$\bm{M} \gets \sum_{n=1}^N \nabla q_{\bm{\theta}} (\xb_n) \nabla q_{\bm{\theta}} (\xb_n)^T$\;
\qquad \textcolor{blue}{\tcp{Update $\bm{M}$}}

}
\KwRet{$\bm{M}$, $q_{\bm{\theta}}$}
\end{algorithm}

\subsection{Optimization of the coefficients $\bm{\theta}_n$}

To derive an efficient procedure for optimizing the coefficient vector $\bm{\theta} \in \mathbb{R}^N$,
the first step is to rewrite the optimization as a standard least-squares problem.
Define the rescaled coefficients 
\begin{equation*}
    \overline{\theta}_n = \theta_n / \sqrt{w}_n,
    \quad n = 1, \ldots, N.
\end{equation*}
Also, introduce the kernel matrix
$\bm{K} \in \mathbb{R}^{N \times N}$
and the solution vector $\bm{b} \in \mathbb{R}^N$ with entries given by
\begin{align*}
    & \begin{aligned}
    k_{mn} 
    &= \sqrt{w_m} \sqrt{w_n} \bigl[k_{\bm{M}}(\bm{x}_m, \bm{x}_n) - k_{\bm{M}}(\bm{x}_m, \bm{y}_n) \\
    & \quad - k_{\bm{M}}(\bm{y}_m, \bm{x}_n) + k_{\bm{M}}(\bm{y}_m, \bm{y}_n)\bigr],
    \end{aligned} \\
    & b_n = \sqrt{w_n} [1_B(\bm{y}_n) - 1_B(\bm{x}_n)]
\end{align*}
Plugging the FCM into the variational principle \eqref{eq:loss}, the optimal vector $\overline{\bm{\theta}} \in \mathbb{R}^N$ is the minimizer of the regularized least-squares loss:
\begin{equation}
\label{eq:linear_algebra}
    \min_{\overline{\bm{\theta}} \in \mathbb{R}^N} \tfrac{1}{N} \lVert \bm{K} \overline{\bm{\theta}} - \bm{b} \rVert^2
    + \gamma\, \overline{\bm{\theta}}^T \bm{K} \overline{\bm{\theta}}.
\end{equation}
Notice that the loss function \eqref{eq:linear_algebra} includes a regularization term $\gamma\, \overline{\bm{\theta}}^T \bm{K} \overline{\bm{\theta}}$ with $\gamma > 0$ that shrinks the norm of the coefficients to help prevent overfitting.
Specifically, $(\overline{\bm{\theta}}^T \bm{K} \overline{\bm{\theta}})^{1/2}$ is the reproducing kernel Hilbert space (RKHS) norm associated with the committor approximation $q_{\bm{\theta}}$.
RKHS norms appear frequently in the kernel literature due to their theoretical properties and computational convenience;
see Sec.~2.3 of the paper~ \cite{kanagawa2018gaussian} for an introduction.

For large data sets with $N \geq 10^5$ data points, it is computationally convenient to use a randomized strategy for solving
\eqref{eq:linear_algebra}.
The randomly pivoted Cholesky algorithm (Algorithm \ref{alg:RPCholesky}) selects a set of ``landmark'' indices 
\begin{equation*}
    S = \{s_1, s_2, \ldots, s_r\}.
\end{equation*}
Then, the coefficient vector $\overline{\bm{\theta}} \in \mathbb{R}^N$ is restricted to satisfy $\overline{\theta}_i = 0$ for all $i \notin S$ and
\begin{equation*}
    \overline{\bm{\theta}}(S) = \bm{\eta}.
\end{equation*}
Last, the vector $\bm{\eta} \in \mathbb{R}^r$ is optimized by solving
\begin{equation*}
    \min_{\bm{\eta} \in \mathbb{R}^r} \tfrac{1}{N} \lVert \bm{K}(:,S) \bm{\eta} - \bm{b} \rVert^2
    + \gamma\, \bm{\eta}^T \bm{K}(S, S) \bm{\eta},
\end{equation*}
which is equivalent to the linear system
\begin{equation*}
    [\Kb(:,S)^T \Kb(:,S) + \gamma N \Kb(S,S)]\, \bm{\eta} = \Kb(:,S)^T \bb.
\end{equation*}
It takes just $\mathcal{O}(N r^2)$ operations to form and solve this linear system by a direct method.
Moreover, there is no need to generate the complete kernel matrix $\bm{K} \in \mathbb{R}^{N \times N}$; it suffices to generate the $r$-column submatrix $\bm{K}(:, S)$, and the storage requirements are thus $\mathcal{O}(N r)$.

\begin{algorithm}[t]
\caption{RPCholesky\cite{chen_randomly_2023}}\label{alg:RPCholesky}
\KwData{Formula for looking up entries of $\bm{K}$, approximation rank $r$ (multiple of $10$)\;}
\KwResult{Index set $S$\;} 

Initialize:
$\Fb \gets \Ob$, $S \gets \emptyset$, $T \gets r/10$, and $\db \gets \textup{diag}(\Kb)$\;
\qquad \textcolor{blue}{\tcp{Initialize parameters}}

\For{\textup{$i=0$ to $9$}}{
$s_{iT+1},\ldots, s_{iT+T} \sim \db/\sum_{j=1} d_j$\;
\qquad \textcolor{blue}{\tcp{Randomly sample indices}}

$S' \gets \texttt{Unique}(\{s_{iT+1},\ldots,s_{iT+T}\})$\;
\qquad \textcolor{blue}{\tcp{Identify unique indices}}

$S \gets S \cup S'$\;
\qquad \textcolor{blue}{\tcp{Add new indices to landmark set}}

$\Gb \gets \Kb(:,S') - \Fb\Fb(S',:)^\ast$\;
\qquad \textcolor{blue}{\tcp{Evaluate new columns}}

$\Gb(S',:) \gets \Gb(S',:) + \epsilon_{\rm mach} \textup{tr}(\Gb(S',:)) \Ib$ \;
\qquad \textcolor{blue}{\tcp{Regularize}}

$\Rb \gets \texttt{Cholesky}(\Gb(S',:))$\;
\qquad \textcolor{blue}{\tcp{Upper triangular Cholesky factor}}

$\Fb(:,iT+1:iT+|S'|) \gets \Gb \Rb^{-1}$\;
\qquad \textcolor{blue}{\tcp{Update approximation}}

$\db \gets \db - \texttt{SquaredRowNorms}(\Gb \Rb^{-1})$\;
\qquad \textcolor{blue}{\tcp{Update sampling probabilities}}

$\db \gets \max\{\db,0\}$\;
\qquad \textcolor{blue}{\tcp{Ensure nonnegative probabilities}}

$\db(S') \gets 0$\;
\qquad \textcolor{blue}{\tcp{Prevent double sampling}}
}

\KwRet{$S$}

\end{algorithm}

\subsection{Optimization of the hyperparameters}

The last parameters to optimize are the bandwidth $\varepsilon > 0$, the regularization $\gamma > 0$, and the approximation rank $r$.
To select these parameters, a simple but effective approach is a grid search.
In the grid search, $20\%$ of the data is set aside as validation data and the rest is training data.
The FCM is optimized using the training data.
Then the loss function \eqref{eq:loss} is evaluated using the validation data, and the best parameters are the ones that minimize the loss.

Based on the grid search results for the numerical experiments in Sec.~\ref{sec:experiments}, a good default bandwidth is $\varepsilon = 1$.
This bandwidth is intuitively reasonable since it corresponds to one standard deviation unit for the transformed data points $\bm{M}^{1/2} \bm{x}_1, \ldots, \bm{M}^{1/2} \bm{x}_n$.

Based on the grid search results, the experiments use a regularization parameter of $\gamma = 10^{-6}$, but this value is not intuitive and it remains unclear whether this would be a good default for other problems.
Any value of $\gamma$ smaller than $\gamma = 10^{-6}$ also leads to a similar loss (less than $1\%$ change).
To be conservative, the highest value $\gamma = 10^{-6}$ was selected.

The last parameter to optimize is the approximation rank $r$ that is used in RPCholesky.
Raising $r$ increases the accuracy but also increases the linear algebra cost since the FCM optimization requires $\mathcal{O}(N r^2)$ floating point operations.
Typical values of $r$ range from $10^2$--$10^4$, and the theoretical optimum depends on the number of large eigenvalues in the full-data kernel matrix \cite{chen_randomly_2023}.
In the Sec.~\ref{sec:experiments} experiments, the rank is set to $r = 10^3$, at which point the results are nearly converged (see Fig.~\ref{fig:fcm_v_nn_loss}).

\section{Numerical results} \label{sec:experiments}

This section describes numerical results from applying the fast committor machine (FCM) to two Markovian systems: the overdamped Langevin system with a triple-well potential (Sec.~\ref{sec:triple}) and a stochastic simulation of alanine dipeptide (Sec.~\ref{sec:adp}).
The code to run the experiments is available on Github\footnote{https://github.com/davidaristoff/Fast-Committor-Machine/}.

\subsection{Neural network comparison} \label{sec:comparison}

In each experiment, the FCM was compared against a fully connected feedforward neural network.
To make the comparisons fair, the models were designed with nearly the same number of parameters.
The FCM has $r$ linear coefficients and $d(d + 1)/2$ free parameters in the scaling matrix ($d$ is the phase space dimension).
This adds up to $1055$ parameters for the triple-well experiment ($r = 1000,\, d = 10$) and $1465$ parameters for the alanine dipeptide experiment ($r = 1000,\, d = 30$).
The neural architecture includes $\ell$ hidden layers with $p$ neurons per layer and a $\tanh$ nonlinearity, together with an outer layer using a sigmoid nonlinearity.
The total number of parameters in the neural network is thus
\begin{equation*}
    p (d+1) + (\ell-1) p (p + 1) + (p + 1)
\end{equation*}
and includes $1081$ parameters for the triple-well experiment ($\ell = 2,\, p = 27,\, d = 10$) and $1481$ parameters for the alanine dipeptide experiment ($\ell = 3,\, p = 20,\, d = 30$).
Larger neural nets could potentially improve the accuracy \cite{belkin2019reconciling}, but they would be more challenging and costly to train.

The neural nets were trained using the PyTorch package \cite{paszke2019pytorch} and the AdamW optimizer \cite{loshchilov2019decoupled}.
Before the training,
$20\%$ of the data was set aside for validation, and the remaining $80\%$ was training data.
During each epoch, the optimizer evaluated all the training data points and applied a sequence of stochastic gradient updates using mini-batches of $500$ points.
After each epoch, the validation data was used to estimate the loss function \eqref{eq:loss}.
After $20$ epochs with no improvement to the loss function, the training was halted and the parameter set leading to the lowest loss function was selected.

One crucial parameter when training neural nets is the learning rate, which was set to $10^{-4}$ for the triple-well experiment and $5 \times 10^{-4}$ for the alanine dipeptide experiment.
When the learning rate is too small, the training is excessively slow.
When the learning rate is too large, the model exhibits a significant decrease in accuracy or even fails to converge.
As a complication, changing the learning rate may require changing the patience parameter (number of epochs with no improvement before halting the training).

On the whole, training neural networks for committor approximation involves choosing an appropriate architecture and an appropriate learning rate with minimal guidance about the best choice of these parameters.
In contrast, the parameter-tuning for the FCM is more straightforward: the default bandwidth $\epsilon=1$ is sufficient for these experiments. Moreover, 
adjusting the approximation rank $r$ or regularization parameter $\gamma$ has direct and predictable impacts on training speed and performance.

\subsection{Triple-well potential energy} \label{sec:triple}

The first numerical experiment is based on the overdamped Langevin dynamics
\begin{equation}
\label{eq:overdamped}
    d{\xb}(t) = -\nabla V(\xb(t))\,dt + \sqrt{2\beta^{-1}}\,d{\boldsymbol w}(t).
\end{equation}
The potential function $V(x)$ is constructed as
\begin{equation*}
V(\xb) = V_0(\eb_1^T \xb, \eb_2^T \xb) + 2 \sum\nolimits_{i=3}^{10} (\eb_i^T \xb)^2,
\end{equation*}
where $V_0$ is the three-well potential~\cite{cerou_multiple_2011} that is
illustrated in Figure~\ref{fig:potential}, and $\eb_i$ is the $i$th basis vector.
The equilibrium density for this dynamics is the Gibbs measure
\begin{equation*}
    \mu(\bm{x}) = \frac{{\rm e}^{-\beta V(\bm{x})}}{\int {\rm e}^{-\beta V(\bm{y})} \mathop{d\bm{y}}},
\end{equation*}
where $\beta > 0$ is the inverse-temperature parameter.

\begin{figure}[t]
  \centering
   \subfloat[][]{\includegraphics[width=.23\textwidth]{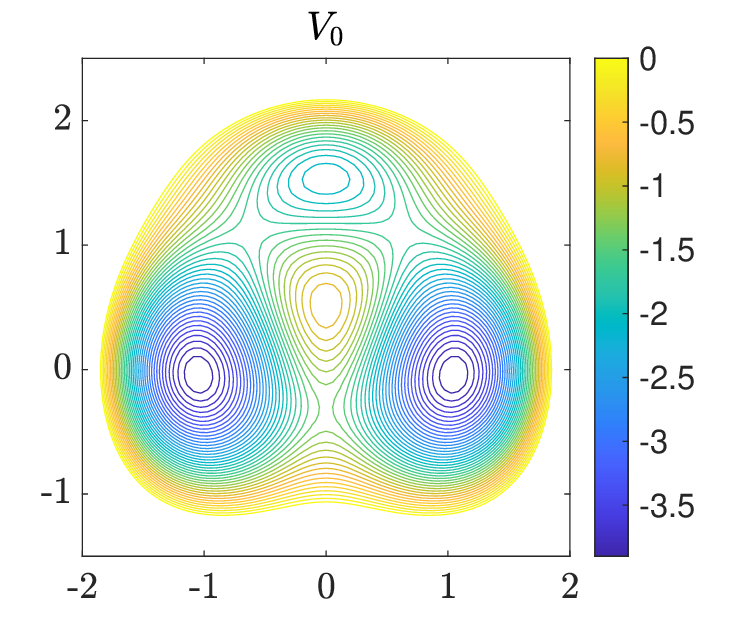}}  \quad 
   \subfloat[][]{\includegraphics[width=.23\textwidth]{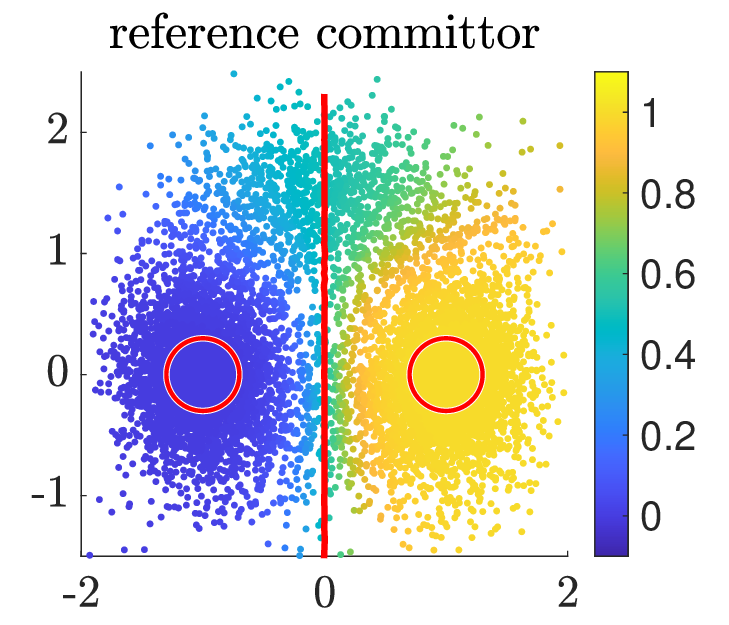}} 
      \caption{(a) The potential function $V_0$. (b) The reference committor evaluated on the validation data points with states $A$ and $B$ and the committor one-half surface indicated in red.}
  \label{fig:potential}
\end{figure}

The goal of the experiment is to calculate the committor function associated with the inverse temperature $\beta = 2$ and the states
\begin{align*}
    A &= \{\bm{x} \in \mathbb{R}^{10} \colon (\bm{e}_1^T \bm{x} + 1)^2 + (\bm{e}_2^T \bm{x})^2 \leq 0.3^2 \}, \\
    B &= \{\bm{x} \in \mathbb{R}^{10} \colon (\bm{e}_1^T \bm{x} - 1)^2 + (\bm{e}_2^T \bm{x})^2 \leq 0.3^2 \}.
\end{align*}
The definitions for $A$ and $B$ depend only on coordinates $1$ and $2$.
Coordinates $3$--$10$ are nuisance coordinates which have no effect on the committor but increase difficulty of the committor approximation.
Because the committor only depends on coordinates $1$ and $2$, the finite elements method can solve the two-dimensional PDE formulation of the committor problem to generate a highly accurate reference;
see Figure \ref{fig:potential} for an illustration.

The data set was generated by a two-stage process.
In the first stage, the initial states $(\bm{x}_n)_{1 \leq n \leq N}$ were sampled by running the Langevin dynamics \eqref{eq:overdamped} at the inverse temperature $\beta_s = 1$ and storing the positions after $N = 10^6$ uniformly spaced time intervals.
The low $\beta_s$ value was needed to ensure adequate phase space coverage.
Since the sampling distribution diverged from the target distribution, the initial states were weighted according to 
$w_n = {\rm e}^{(\beta_s - \beta) V(\xb_n)}$.
This weight definition is consistent with eq.~\eqref{eq:alternative}, which states that the weights are likelihood ratios multiplied by an arbitrary constant factor.

In the second stage, the model was run forward for an additional $\tau = 10^{-2}$  time units at the target inverse temperature $\beta = 2$, starting from $\bm{x}(0) = \bm{x}_n$ and ending at a new point $\bm{x}(\tau) = \bm{y}_n$.
This process was repeated for each data point for $n = 1, \ldots, 10^6$.

\begin{figure}[t]
  \centering
  \subfloat[][]{\includegraphics[width=.23\textwidth]{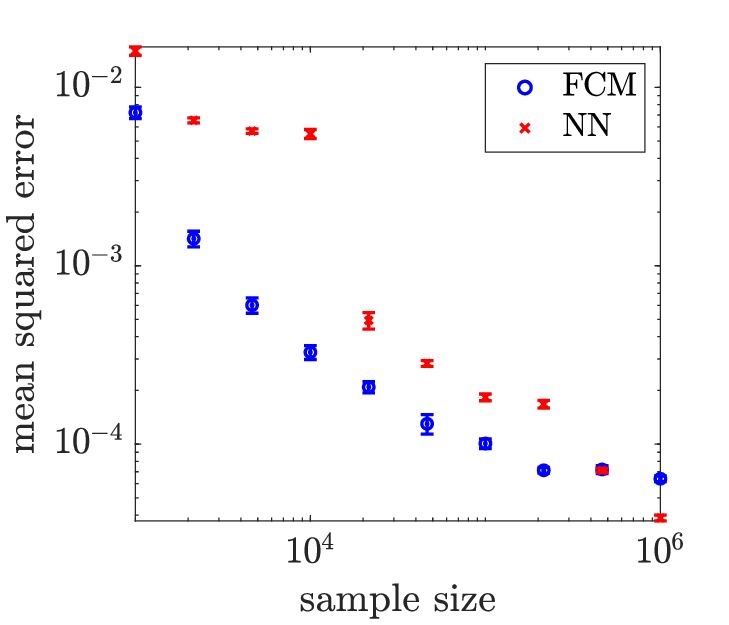}} \quad
  \subfloat[][]{\includegraphics[width=.23\textwidth]{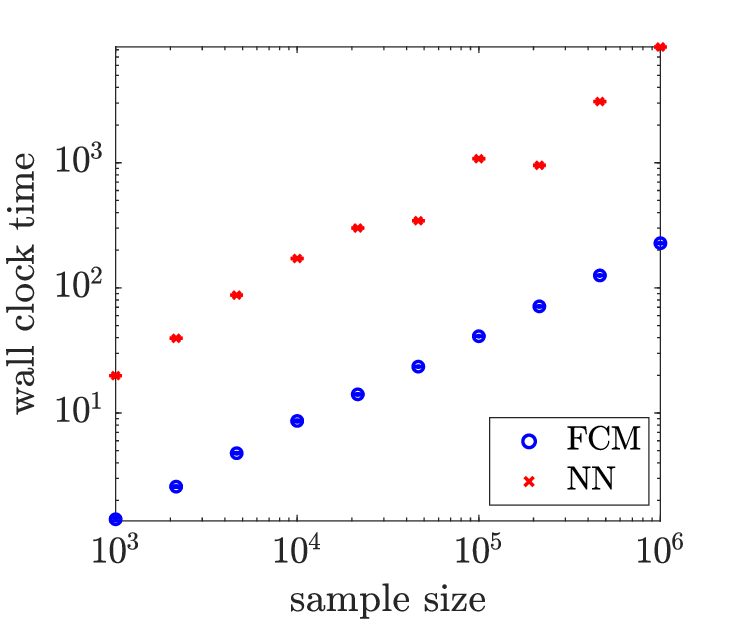}}
      \caption{Comparison of neural net (NN) and FCM performance for the triple-well system, with standard error bars computed from $10$ independent runs of the FCM. (a) Mean squared error computed using the reference committor. (b) Runtime in seconds.}
  \label{fig:toy_comparison}
\end{figure}

Figure~\ref{fig:toy_comparison} evaluate the performance of the FCM and the feedforward neural network across 10 data sizes logarithmically spaced between $N = 10^3$ and $N = 10^6$.
The largest experiments use the full data set with $N = 10^6$ data points, while the other experiments use data pairs chosen uniformly at random.
For all sample sizes $N < 10^6$, the FCM achieves higher accuracy than the neural net and also trains more quickly.

The detailed runtime and accuracy comparisons between the FCM and the neural net committor approximation may depend on implementation choices (neural net structure, stopping criteria, optimization method, etc.).
Nonetheless, these tests suggest that the FCM is a competitive method for the triple-well experiment.

As an additional advantage, the FCM exhibits robustness during training.
Figure~\ref{fig:fcm_v_nn_loss} shows that the
FCM error decreases and then stabilizes after a few iterations.
In contrast, the neural net error behaves unpredictably with epochs unless the learning rate is very small.
To interpret the plot, recall that the FCM updates the scaling matrix $\bm{M}$ once per iteration and runs for $5$ iterations, while the neural net updates the neural net parameters many times per epoch and runs for a variable number of epochs based on the stopping rule.

\begin{figure}[t]
    \centering
    \subfloat[][]{\includegraphics[width=.24\textwidth]{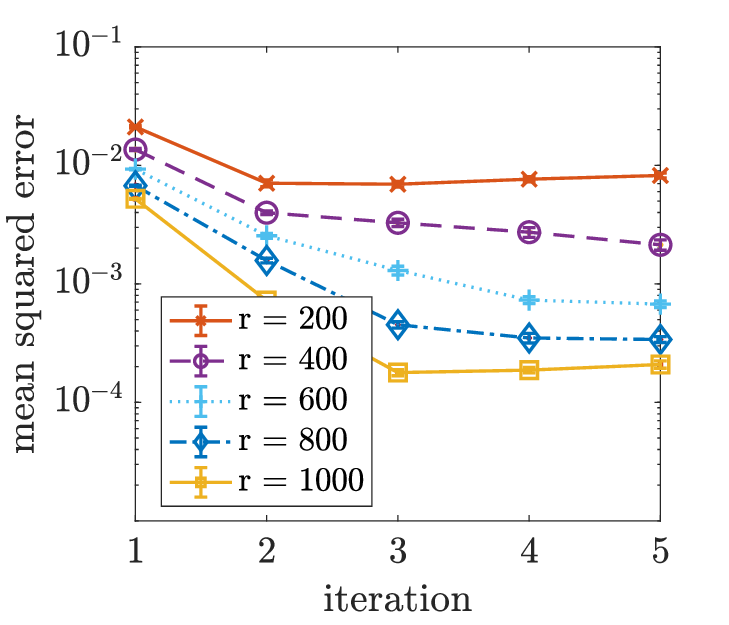}} \hfill
    \subfloat[][]{\includegraphics[width=.24\textwidth]{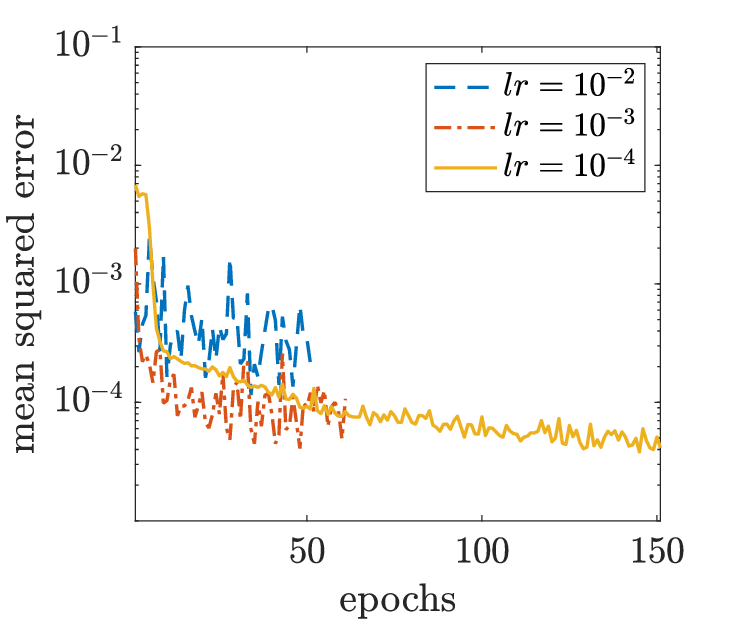}}
    \caption{Training performance for single instances of the triple-well experiment, with error computed using the reference committor from Fig.~\ref{fig:potential}.
    (a) The FCM with different approximation ranks $r$.
    (b) The neural net with different learning rates $lr$.}
    \label{fig:fcm_v_nn_loss}
\end{figure}

Last, Figure~\ref{fig:toy_M} displays
the square root of the scaling matrix, which shows the most significant subspaces for committor estimation.
After convergence, the transformation $\bm{M}^{1/2}$ maps away the nuisance coordinates $3$--$10$.
The leading eigenvector of $\bm{M}^{1/2}$ nearly aligns with coordinate $1$, signaling that coordinate $1$ is the most essential and coordinate $2$ is the second most essential.
While the two-dimensional figures make it seem that the FCM is solving an easy problem in two-dimensional space,
the FCM is actually solving a harder problem in ten-dimensional space.
Nonetheless, the FCM is reducing the problem to two dimensions through the automatic identification of the active subspaces.

\begin{figure}[t]
  \centering
   \subfloat[][]{\includegraphics[width=.23\textwidth]{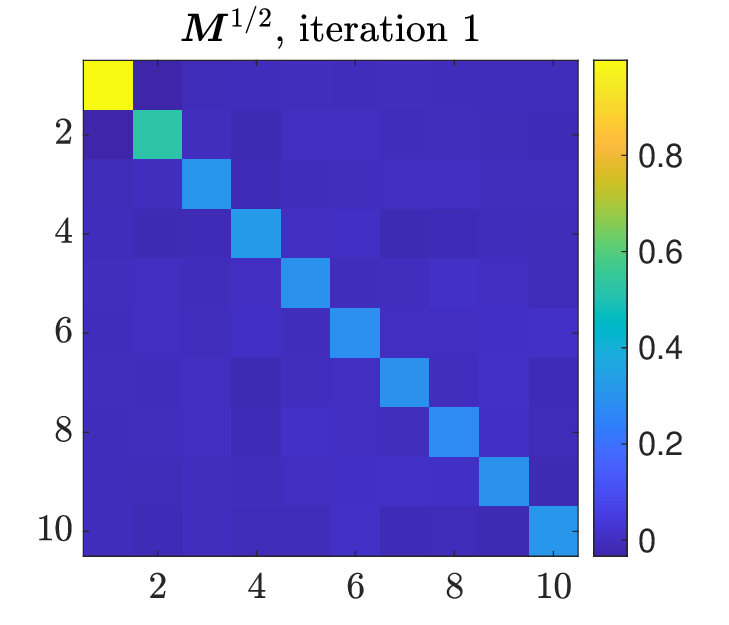}} \quad 
  \subfloat[][]{\includegraphics[width=.23\textwidth]{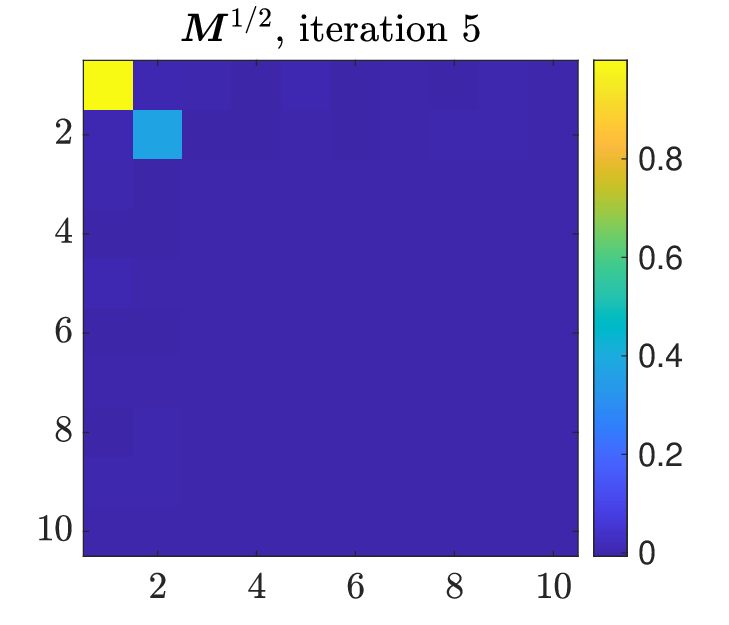}} 
      \caption{Square root of scaling matrix for the triple-well system when $N = 10^6$ and $r = 1000$. (a) After 1 iteration. (b) After 5 iterations, corresponding to convergence.}
  \label{fig:toy_M}
\end{figure}

\subsection{Alanine dipeptide} \label{sec:adp}

The previous example may seem cherry-picked for the FCM's success since there is so clearly a reduction to two dimensions.
Yet many stochastic systems can be described using low-dimensional subspaces,
even high-dimensional biomolecular systems \cite{husic2018markov}.
As a concrete example, alanine dipeptide is a small molecule whose dynamics can be reduced to a low-dimensional subspace.

The experiments in this section are drawn from a metadynamics simulation of alanine dipeptide based on the tutorial~\footnote{https://www.plumed.org/doc-v2.7/user-doc/html/masterclass-21-4.html}.
Details of the simulation can be found on Github\footnote{https://github.com/davidaristoff/Fast-Committor-Machine/}.
After excluding the hydrogen atoms, the alanine dipeptide data set contains $(x, y, z)$-coordinates for the ten backbone atoms, leading to 30-dimensional data points. 
Figure~\ref{fig:alanine_potential} shows that $\phi$ and $\psi$ dihedral angles give a simple description of the free energy surface, containing two prominent metastable states.

\begin{figure}[t]
    \centering
    \subfloat[][]
    {\includegraphics[width=.23\textwidth]{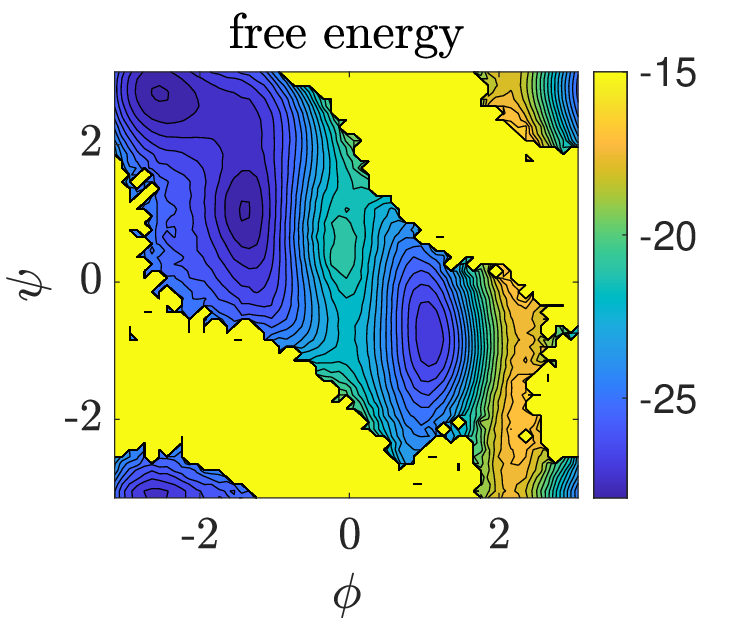}} \quad 
    \subfloat[][]
    {\includegraphics[width=.23\textwidth]{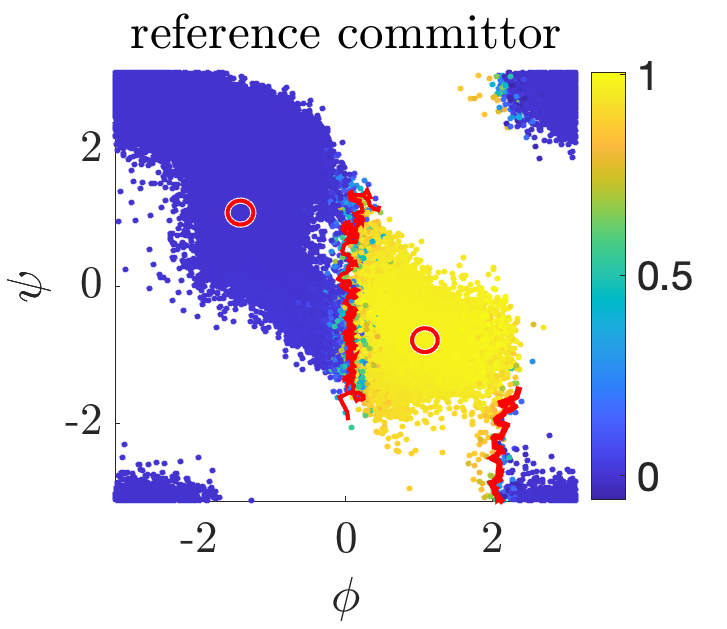}} 
    \caption{(a) Free energy surface of alanine dipeptide in $\phi$ and $\psi$ coordinates, compared to reference committor half-surface (red). (b) The reference committor in $\phi$ and $\psi$ coordinates with states $A$ and $B$ and the committor one-half surface indicated in red.}
    \label{fig:alanine_potential}
\end{figure}

The goal of the alanine dipeptide experiment is to estimate the committor function for the two metastable states, labeled as $A$ and $B$.
See Figure~\ref{fig:alanine_potential} for a picture of the precise $A$ and $B$ definitions and a reference committor which is generated by running the FCM with the largest data set ($N = 10^6$) and approximation rank $r = 2000$.

Figure~\ref{fig:alanine_MSE} shows the mean squared error and runtimes of the neural network and the FCM with $r = 1000$, as compared to the reference committor.
The results again show the FCM is more accurate than the neural network and trains more quickly. 
These results may depend on the details of the neural network optimization, but they imply the FCM is a competitive method for the alanine dipeptide experiment. 

\begin{figure}[t]
    \centering
    \subfloat[][]
    {\includegraphics[width=.23\textwidth]{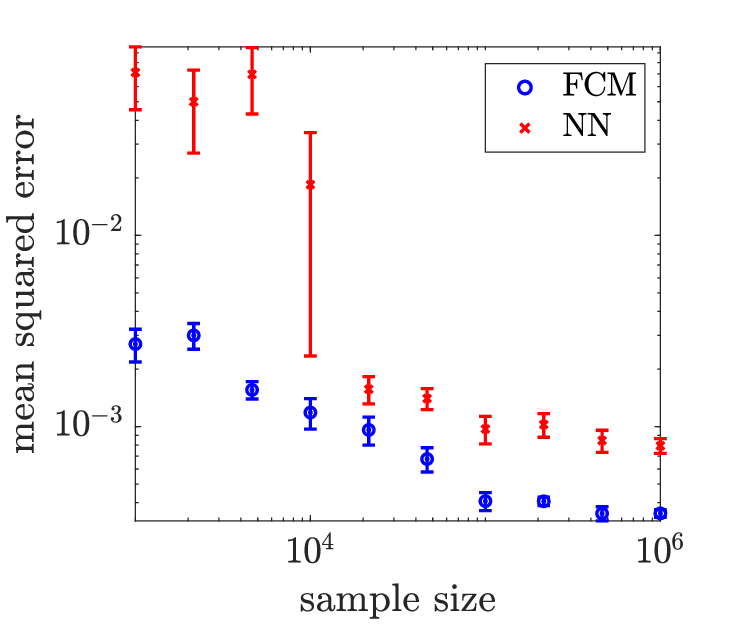}} \quad 
    \subfloat[][]
    {\includegraphics[width=.23\textwidth]{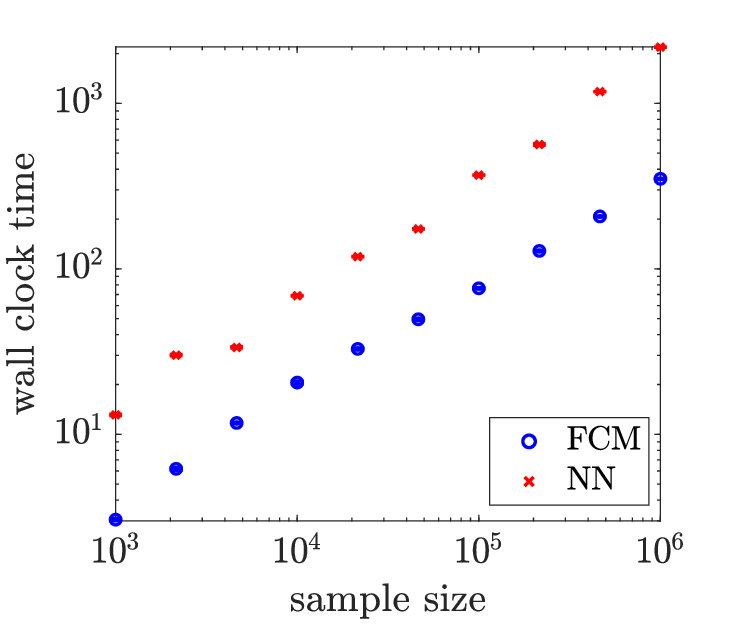}} 
    \caption{Comparison of neural net (NN) and FCM performance for alanine dipeptide, with standard error bars computed from $10$ independent simulations.
    (a) Mean squared error, computed with respect to the reference. (b) Runtime in seconds.}
    \label{fig:alanine_MSE}
\end{figure}

Last, Figure~\ref{fig:alanine_M} 
shows the square root of the FCM scaling matrix.
The matrix has two eigenvalues that are much larger than the rest, signaling that a two-dimensional linear subspace is closely aligned with the gradients of the committor.
Linear regression confirms that the top $2$ eigenvectors explain 
$95\%$ of the variance in the committor values, whereas the nonlinear $\phi$ and $\psi$ coordinates only explain $62\%$ of the variance.
See the right panel of Fig.~\ref{fig:alanine_M} for a picture of the reference committor mapped onto the top $2$ eigenvectors.

\begin{figure}[t]
    \centering
    \subfloat[][]{\includegraphics[width=.23\textwidth]{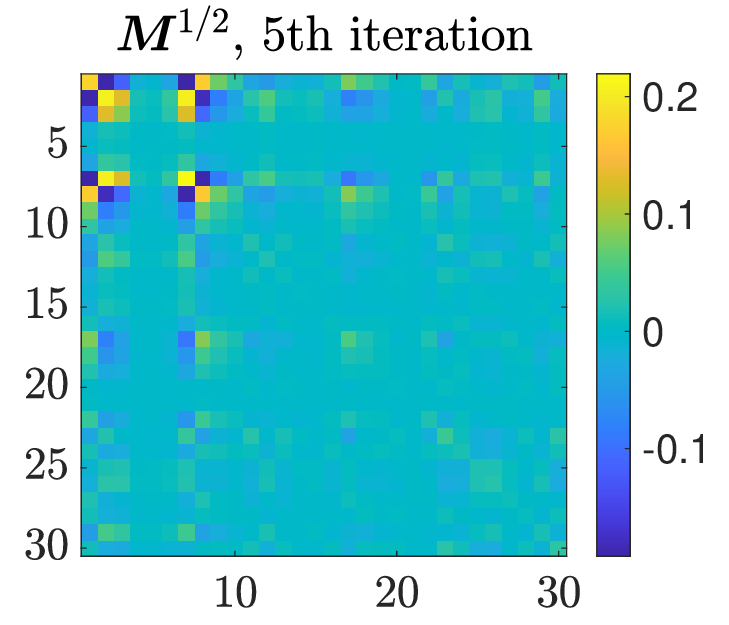}}  \quad
    \subfloat[][]{\includegraphics[width=.23\textwidth]{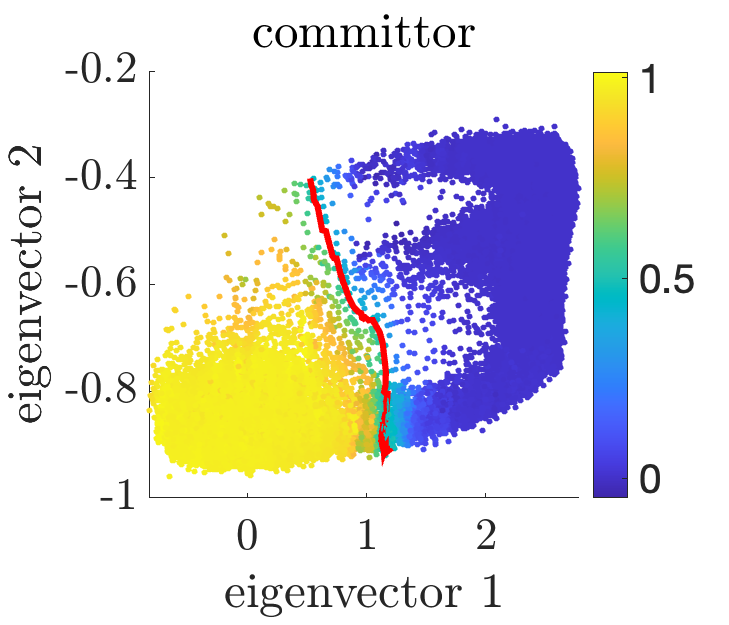}}
    \caption{Results of applying FCM to alanine dipeptide when $N = 10^6$ and $r = 1000$.
    (a) $\bm{M}^{1/2}$ after $5$ iterations of Algorithm~\ref{alg:main}, 
    corresponding to convergence.
    (b) Reference committor mapped onto top $2$ eigenvectors of $\bm{M}$ with reference committor one-half surface indicated in red.
    }
  \label{fig:alanine_M}
\end{figure}

\section{Conclusion}
\label{sec:conclusion}

The FCM is a method for efficiently solving the committor problem which shows promising results when applied to triple-well and alanine dipeptide systems.
As the main conceptual feature, the method identifies a scaling matrix that emphasizes low-dimensional subspaces with maximal variation in the committor values.
The method also uses randomized numerical linear algebra to achieve a training time faster than a neural net with the same number of parameters.

The next research goal is testing the FCM on high-dimensional stochastic systems arising in molecular dynamics and other areas of science.
In these systems, it may be challenging to generate an appropriate data set, since the states $A$ and $B$ might be separated by large free energy barriers, which make transition events rare and justify the use of enhanced sampling.
Even for the 10- and 30-dimensional experiments in this paper,
importance sampling and
metadynamics were needed to ensure a large density of sample points in the transition region where the committor is $0.1$--$0.9$.
These sampling approaches were applied heuristically, and future progress will require a more careful mathematical and empirical study.
To that end, a promising approach that was recently developed for committor approximation \cite{rotskoff2022active} involves umbrella sampling with a sequence of bins emphasizing different ranges of committor values, including many bins near the committor one-half surface.
Building on this strategy, there is hope to develop a fully adaptive FCM method that generates data and a corresponding committor estimate using only the $A$ and $B$ definitions.

Last, there remain challenging mathematical questions about the FCM's performance.
For example, why does the method take so few iterations to converge and what makes the exponential kernel \eqref{e:kMkernel} preferable over other choices?
Addditionally, there is a deep question at the core of the FCM about why the linear rescaling is so successful, how to describe it mathematically, and what nonlinear extensions would be possible.

\section*{Acknowledgements}

D. Aristoff and G. Simpson gratefully acknowledge support from the National Science Foundation via Award No. DMS 2111277.
R.J.W. was supported by the Office of Naval Research through BRC Award N00014-18-1-2363, the National Science Foundation through FRG
Award 1952777, and Caltech through the Carver Mead New Adventures Fund, under the aegis of Joel A. Tropp.

\appendix \label{sec:appendix}

\section{Justification for the kernel approximation} \label{sec:alternative}

The FCM is based on two choices regarding the shape of the kernel function and the coefficients used in the optimization.
This appendix justifies the choices that were made.

The exponential kernel \eqref{e:kMkernel} is used throughout the paper.
In the Sec.~\ref{sec:experiments} experiments, the exponential kernel outperforms the popular square exponential kernel
\begin{equation*}
    k_{\bm{M}}(\bm{x}, \bm{x}') 
    = \exp\bigl(-\varepsilon^{-2} \lVert \bm{M}^{1/2} (\bm{x} - \bm{x}') \rVert^2 \bigr).
\end{equation*}
Nonetheless, there may be an opportunity to further improve the FCM by using an alternative kernel of the form
\begin{equation*}
    k_{\bm{M}}(\bm{x}, \bm{x}') = \phi(\varepsilon^{-1}\|\bm{M}^{1/2}(\xb-\xb')\|)
\end{equation*}
for an optimized univariate function $\phi: \mathbb{R} \rightarrow \mathbb{R}$.

The most general data-driven approximation using the exponential kernel is a linear combination of kernel functions centered on the input data points $\bm{x}_n$ or the output data points $\bm{y}_n$.
The functional form can be written as
\begin{equation*}
    q_{\bm{c}, \bm{d}}(\bm{x}) 
    = \sum_{n=1}^N 
    c_n k_{\bm{M}}(\bm{x}_n, \bm{x}) 
    + \sum_{n=1}^N d_n k_{\bm{M}}(\bm{y}_n, \bm{x}),
\end{equation*}
where $\bm{c} \in \mathbb{R}^N$ and $\bm{d} \in \mathbb{R}^N$ are variational parameters to be optimized.
Yet, this form can be simplified.
Theorem \ref{thm:optimal} shows the optimal coefficients must satisfy $\bm{c} = -\bm{d}$, which leads to a more specific and concise committor approximation
\begin{equation*}
    q_{\bm{\theta}}(\bm{x}) 
    = 
    \sum_{n=1}^N 
    \theta_n \bigl[ k_{\bm{M}}(\bm{x}_n, \bm{x}) - k_{\bm{M}}(\bm{y}_n, \bm{x})].
\end{equation*}
The proof relies on direct linear algebra calculations.

\begin{theorem} \label{thm:optimal}
Define the least-squares loss function
\begin{equation*}
    L_{\gamma}(\bm{c}, \bm{d}) = \frac{1}{N} \biggl\lVert \begin{bmatrix} \mathbf{I} \\ -\mathbf{I} \end{bmatrix}^T\!
    \bm{K}\!
    \begin{bmatrix} \bm{c} \\ \bm{d} \end{bmatrix} - \bm{b} \biggr\rVert^2
    + \gamma\, \begin{bmatrix} \bm{c} \\ \bm{d} \end{bmatrix}^T\! \bm{K}\! \begin{bmatrix} \bm{c} \\ \bm{d} \end{bmatrix},
\end{equation*}
where the positive semidefinite kernel matrix
\begin{equation*}
    \bm{K} = \begin{bmatrix} \bm{K}^{11} & \bm{K}^{12} \\
    \bm{K}^{21} & \bm{K}^{22}
    \end{bmatrix},
\end{equation*}
consists of four blocks with entries
\begin{align*}
    k^{11}_{mn} &= \sqrt{w_m} \sqrt{w_n}\, k_{\bm{M}}(\bm{x}_m, \bm{x}_n), \\
    k^{12}_{mn} &= \sqrt{w_m} \sqrt{w_n}\, k_{\bm{M}}(\bm{x}_m, \bm{y}_n), \\
    k^{21}_{mn} &= \sqrt{w_m} \sqrt{w_n}\, k_{\bm{M}}(\bm{y}_m, \bm{x}_n), \\
    k^{22}_{mn} &= \sqrt{w_m} \sqrt{w_n}\, k_{\bm{M}}(\bm{y}_m, \bm{y}_n).
\end{align*}
Then, the loss function $\bm{L}_{\gamma}(\bm{c}, \bm{d})$ has a minimizer that satisfies $\bm{c} + \bm{d} = \bm{0}$.
\end{theorem}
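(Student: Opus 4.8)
The plan is to exhibit an \emph{explicit} global minimizer of $L_\gamma$ that lies in the subspace $\{(\bm{c},\bm{d}) : \bm{c}+\bm{d}=\bm{0}\}$, rather than to argue that every minimizer lies there. Introduce $\bm{J} = \begin{bmatrix}\mathbf{I}\\-\mathbf{I}\end{bmatrix}\in\mathbb{R}^{2N\times N}$ and $\bm{v}=\begin{bmatrix}\bm{c}\\\bm{d}\end{bmatrix}$, so that $L_\gamma(\bm{v}) = \tfrac1N\lVert \bm{J}^T\bm{K}\bm{v}-\bm{b}\rVert^2 + \gamma\,\bm{v}^T\bm{K}\bm{v}$. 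Since the first term is the squared norm of an affine map of $\bm{v}$ and $\bm{K}$ is symmetric positive semidefinite, $L_\gamma$ is convex; hence it suffices to produce a single stationary point of the desired form, because for a convex objective any critical point is a global minimizer (this also settles existence of a minimizer, so no coercivity argument is needed).

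Next I would write the normal equations. Differentiating and clearing constants, stationarity reads $\bm{K}\bm{J}\bm{J}^T\bm{K}\bm{v} + \gamma N\,\bm{K}\bm{v} = \bm{K}\bm{J}\bm{b}$, i.e. $\bm{K}\bigl(\bm{J}\bm{J}^T\bm{K}\bm{v} + \gamma N\,\bm{v} - \bm{J}\bm{b}\bigr)=\bm{0}$. A \emph{sufficient} condition for stationarity is therefore $\bm{J}\bm{J}^T\bm{K}\bm{v} + \gamma N\,\bm{v} = \bm{J}\bm{b}$ (multiply by $\bm{K}$ to recover the normal equations). This is strictly stronger than stationarity — the true normal equations constrain only $\bm{K}\bm{v}$ — and that extra slack is exactly what lets us search for a solution of a special shape.

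The key step is to substitute the ansatz $\bm{v}=\bm{J}\bm{u}$ with $\bm{u}\in\mathbb{R}^N$ unknown. The sufficient condition becomes $\bm{J}\bigl[(\bm{J}^T\bm{K}\bm{J}+\gamma N\mathbf{I})\bm{u}-\bm{b}\bigr]=\bm{0}$, and since $\bm{J}$ has full column rank this is equivalent to $(\bm{J}^T\bm{K}\bm{J}+\gamma N\mathbf{I})\bm{u}=\bm{b}$. The matrix $\bm{J}^T\bm{K}\bm{J}=\bm{K}^{11}-\bm{K}^{12}-(\bm{K}^{12})^T+\bm{K}^{22}$ (the reduced kernel matrix of Algorithm~\ref{alg:main}) is positive semidefinite because $\bm{u}^T\bm{J}^T\bm{K}\bm{J}\bm{u}=(\bm{J}\bm{u})^T\bm{K}(\bm{J}\bm{u})\ge 0$, and since $\gamma>0$ the matrix $\bm{J}^T\bm{K}\bm{J}+\gamma N\mathbf{I}$ is positive definite, hence invertible. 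Set $\bm{u}=(\bm{J}^T\bm{K}\bm{J}+\gamma N\mathbf{I})^{-1}\bm{b}$; then $\bm{v}=\bm{J}\bm{u}=\begin{bmatrix}\bm{u}\\-\bm{u}\end{bmatrix}$ is a stationary point, hence a global minimizer of $L_\gamma$, and it satisfies $\bm{c}+\bm{d}=\bm{u}-\bm{u}=\bm{0}$.

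The only real obstacle here is conceptual rather than computational: recognizing at the outset that one should \emph{construct} a minimizer inside $\mathrm{range}(\bm{J})$ through the ansatz $\bm{v}=\bm{J}\bm{u}$, and that replacing the genuine normal equations (which pin down only $\bm{K}\bm{v}$) by the strictly stronger equation $\bm{J}\bm{J}^T\bm{K}\bm{v}+\gamma N\bm{v}=\bm{J}\bm{b}$ costs nothing since only existence of a minimizer is claimed. Everything past that point is the one-line reduction to a positive-definite $N\times N$ system together with the standard fact that, for a convex objective, stationarity implies global optimality.
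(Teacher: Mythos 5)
Your proof is correct, and its backbone is the same as the paper's: both use convexity, write the stationarity condition $\bm{K}\bigl(\bm{J}\bm{J}^T\bm{K}\bm{v}+\gamma N\,\bm{v}-\bm{J}\bm{b}\bigr)=\bm{0}$ with $\bm{J}=\begin{bmatrix}\mathbf{I}\\-\mathbf{I}\end{bmatrix}$, and then pass to the strictly stronger sufficient equation obtained by dropping the leading $\bm{K}$. Where you diverge is in how $\bm{c}+\bm{d}=\bm{0}$ is extracted. The paper solves the full $2N\times 2N$ system \eqref{eq:the_system}, asserting solvability of the non-symmetric matrix $\bm{J}\bm{J}^T\bm{K}+\gamma N\mathbf{I}$ (loosely called ``positive definite'' there), and then left-multiplies by $\begin{bmatrix}\mathbf{I} & \mathbf{I}\end{bmatrix}$, which annihilates $\bm{J}$ and leaves $\gamma N(\bm{c}+\bm{d})=\bm{0}$. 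You instead impose the ansatz $\bm{v}=\bm{J}\bm{u}$ at the outset and reduce, via full column rank of $\bm{J}$, to the symmetric positive definite $N\times N$ system $(\bm{J}^T\bm{K}\bm{J}+\gamma N\mathbf{I})\bm{u}=\bm{b}$, which is precisely the working system \eqref{eq:linear_algebra} of the FCM. Your route buys two things: invertibility is immediate (symmetric PSD plus $\gamma N\mathbf{I}$ with $\gamma>0$), whereas the paper's step implicitly needs an extra observation (e.g., that a product of two positive semidefinite matrices has nonnegative real eigenvalues) to justify solving its non-symmetric system; and it makes explicit that the minimizer constructed in the theorem coincides with the reduced parametrization actually used in Algorithm~\ref{alg:main}. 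The paper's route, in turn, yields slightly more information, namely its parenthetical characterization that all other minimizers differ from the solved one by nullspace-of-$\bm{K}$ vectors, while your argument exhibits a single minimizer of the required form --- which is all the theorem claims. Both arguments rely on $\gamma>0$; you invoke it explicitly, the paper implicitly.
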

\begin{proof}
    The loss function is convex, so any minimizers can be identified by setting the gradient equal to zero:
    \begin{equation*}
        \frac{2}{N}\, \bm{K} \begin{bmatrix} \mathbf{I} \\ -\mathbf{I} \end{bmatrix}
        \biggl[\begin{bmatrix} \mathbf{I} \\ -\mathbf{I} \end{bmatrix}^T
        \bm{K}
        \begin{bmatrix} \bm{c} \\ \bm{d} \end{bmatrix} - \bm{b}\biggr]
        + 2 \gamma\, \bm{K}
        \begin{bmatrix} \bm{c} \\ \bm{d} \end{bmatrix}
        = \bm{0}.
    \end{equation*}
    The equation can be rearranged to yield
    \begin{equation*}
        \bm{K} \biggl[ \begin{bmatrix} \mathbf{I} \\ -\mathbf{I} \end{bmatrix} \begin{bmatrix} \mathbf{I} \\ -\mathbf{I} \end{bmatrix}^T \bm{K} + \gamma N \begin{bmatrix} \bm{I} & \\
        & \bm{I} \end{bmatrix} \biggr] \begin{bmatrix} \bm{c} \\ \bm{d} \end{bmatrix} = \bm{K} \begin{bmatrix} \mathbf{I} \\ -\mathbf{I} \end{bmatrix} \bm{b}
    \end{equation*}
    Therefore, there is a minimizer which comes from choosing $\bm{c}$ and $\bm{d}$ to satisfy the positive definite linear system
    \begin{equation}
    \label{eq:the_system}
        \biggl[ \begin{bmatrix} \mathbf{I} \\ -\mathbf{I} \end{bmatrix} \begin{bmatrix} \mathbf{I} \\ -\mathbf{I} \end{bmatrix}^T \bm{K} + \gamma N \begin{bmatrix} \bm{I} & \\
        & \bm{I} \end{bmatrix} \biggr] \begin{bmatrix} \bm{c} \\ \bm{d} \end{bmatrix} = \begin{bmatrix} \mathbf{I} \\ -\mathbf{I} \end{bmatrix} \bm{b}.
    \end{equation}
    (All other minimizers come from adding a vector in the nullspace of $\bm{K}$).
    Last, multiply the system \eqref{eq:the_system} on the left by $\begin{bmatrix} \bm{I} & \bm{I} \end{bmatrix}$ to reveal $\bm{c} + \bm{d} = \bm{0}$.
\end{proof}

\section{The scaling matrix} \label{sec:scaling}

In any learning task, it can be helpful to model a function $f: \mathbb{R}^d \rightarrow \mathbb{R}$ as the composition of an invertible linear map $\bm{A}: \mathbb{R}^d \rightarrow \mathbb{R}^d$ and a function $g_{\bm{A}}$ that is more amenable to learning: 
\begin{equation*}
    f(\bm{x}) = g_{\bm{A}}(\bm{A} \bm{x}), \quad \bm{x} \in \mathbb{R}^d.
\end{equation*}
One way to ensure the learnability of the function $g_{\bm{A}}$ is by selecting the matrix $\bm{A}$ so that the gradients
\begin{equation*}
    \nabla g_{\bm{A}}(\bm{z}_1), \ldots, \nabla g_{\bm{A}}(\bm{z}_N)
\end{equation*}
are isotropic, where 
\begin{equation*}
    \bm{z}_i = \bm{A} \bm{x}_i, \quad i = 1, 2, \ldots, N
\end{equation*}
are the linearly transformed data points.
The optimal transformation can be characterized as follows:

\begin{proposition}\label{thm:anisotropy}
The following are equivalent:
\begin{itemize}
      \item[(i)] $\bm{A}$ is invertible, and the sample gradients $\nabla g_{\bm{A}}(\bm{z}_1), \ldots, \nabla g_{\bm{A}}(\bm{z}_N)$ are isotropic, that is, 
      \begin{equation*}
      \frac{1}{N} \sum_{i=1}^N 
      |\bm{u}^T \nabla g_{\bm{A}}(\bm{z}_i)|^2 = 1
      \end{equation*}
      for any unit vector $\ub \in \mathbb{R}^d$.
      \item[(ii)] The average gradient product
      \begin{equation*}
          \bm{M} = \frac{1}{N}\sum_{i=1}^N \nabla f(\xb_i) \nabla f(\xb_i)^T.
      \end{equation*}
      is invertible, and $\Ab = \bm{Q} \bm{M}^{1/2}$ for an orthogonal matrix $\bm{Q} \in \mathbb{R}^{d \times d}$.
      \item[(iii)] $\bm{M}$ is invertible, and $\bm{A}$ transforms distances according to
      \begin{equation*}
      \lVert \bm{A}(\bm{x} - \bm{x}') \rVert
      = \lVert \bm{M}^{1/2} (\bm{x} - \bm{x}') \rVert,
      \end{equation*}
      for each $\bm{x}, \bm{x}' \in \mathbb{R}^d$.
\end{itemize}
\end{proposition}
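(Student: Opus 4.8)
The plan is to reduce all three conditions to a single identity: that $\bm{A}$ is invertible and $\bm{A}^T \bm{A} = \bm{M}$ (equivalently, that $\bm{M}$ is invertible and $\bm{A}^T \bm{A} = \bm{M}$). Once I show that each of (i), (ii), (iii) is equivalent to this central statement, the proposition follows by transitivity. The only input beyond bookkeeping is the chain rule for the composition $f(\bm{x}) = g_{\bm{A}}(\bm{A}\bm{x})$, namely $\nabla f(\bm{x}) = \bm{A}^T \nabla g_{\bm{A}}(\bm{A}\bm{x})$, so that when $\bm{A}$ is invertible one has $\nabla g_{\bm{A}}(\bm{z}_i) = \bm{A}^{-T} \nabla f(\bm{x}_i)$.

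First, toward (i): I would record the elementary fact that a finite collection of vectors $\bm{v}_1, \ldots, \bm{v}_N$ satisfies $\tfrac{1}{N}\sum_i |\bm{u}^T \bm{v}_i|^2 = 1$ for every unit vector $\bm{u}$ precisely when $\tfrac{1}{N}\sum_i \bm{v}_i \bm{v}_i^T = \bm{I}$, since the left-hand side equals $\bm{u}^T \bigl( \tfrac{1}{N}\sum_i \bm{v}_i \bm{v}_i^T \bigr) \bm{u}$, and a symmetric matrix all of whose Rayleigh quotients on unit vectors equal $1$ must be the identity. Applying this with $\bm{v}_i = \nabla g_{\bm{A}}(\bm{z}_i) = \bm{A}^{-T}\nabla f(\bm{x}_i)$ converts the isotropy statement in (i) into $\bm{A}^{-T} \bm{M} \bm{A}^{-1} = \bm{I}$, that is, $\bm{A}^T \bm{A} = \bm{M}$; running the computation backward recovers the isotropy from "$\bm{A}$ invertible and $\bm{A}^T\bm{A} = \bm{M}$". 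Note that $\bm{A}^T\bm{A} = \bm{M}$ with $\bm{A}$ invertible automatically forces $\bm{M}$ to be positive definite, hence invertible.

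Next, toward (ii): if $\bm{A}$ is invertible and $\bm{A}^T \bm{A} = \bm{M}$, then $\bm{M}^{1/2}$ is well defined and invertible, and $\bm{Q} := \bm{A} \bm{M}^{-1/2}$ satisfies $\bm{Q}^T \bm{Q} = \bm{M}^{-1/2} \bm{A}^T \bm{A} \bm{M}^{-1/2} = \bm{I}$, so $\bm{Q}$ is orthogonal and $\bm{A} = \bm{Q} \bm{M}^{1/2}$; conversely, if $\bm{M}$ is invertible and $\bm{A} = \bm{Q} \bm{M}^{1/2}$ with $\bm{Q}$ orthogonal, then $\bm{A}$ is invertible and $\bm{A}^T \bm{A} = \bm{M}^{1/2} \bm{Q}^T \bm{Q} \bm{M}^{1/2} = \bm{M}$. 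Toward (iii): writing $\bm{v} = \bm{x} - \bm{x}'$, which ranges over all of $\mathbb{R}^d$, the distance identity in (iii) is equivalent to $\bm{v}^T \bm{A}^T \bm{A} \bm{v} = \bm{v}^T \bm{M} \bm{v}$ for every $\bm{v}$, hence — both sides being symmetric quadratic forms, so equality of forms gives equality of matrices — equivalent to $\bm{A}^T \bm{A} = \bm{M}$; combined with the standing hypothesis in (iii) that $\bm{M}$ is invertible, this is again the central statement.

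I do not expect a genuine obstacle here: every step is elementary linear algebra. The only place that requires care is the invertibility bookkeeping — consistently treating "isotropic" via the second-moment-matrix characterization, and checking that each invertibility hypothesis appearing in (i)--(iii) is either assumed outright or is an automatic consequence of $\bm{A}^T\bm{A} = \bm{M}$, so that the three equivalences genuinely collapse to the same central statement.
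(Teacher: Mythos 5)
Your proposal is correct and follows essentially the same route as the paper's proof: the chain rule gives $\nabla g_{\bm{A}}(\bm{z}_i) = \bm{A}^{-T}\nabla f(\bm{x}_i)$, the Rayleigh-quotient computation turns isotropy into $\bm{A}^{-T}\bm{M}\bm{A}^{-1} = \bm{I}$, i.e.\ $\bm{A}^T\bm{A} = \bm{M}$, and the distance identity in (iii) is handled by polarization, exactly as in the paper. The only (cosmetic) differences are that you organize the argument around the single pivot ``$\bm{A}$ invertible and $\bm{A}^T\bm{A} = \bm{M}$'' and obtain the orthogonal factor directly as $\bm{Q} = \bm{A}\bm{M}^{-1/2}$, whereas the paper extracts $\bm{Q} = \bm{U}\bm{V}^T$ from a singular value decomposition of $\bm{A}$; both are sound, and your invertibility bookkeeping is handled correctly.
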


\begin{proof}
Either of the conditions (i)-(ii) implies the linear map $\bm{A}$ is invertible.
Therefore calculate
\begin{align*}
    \frac{1}{N}\sum_{i=1}^N |\bm{u}^T \nabla g_{\Ab}(\zb_i) |^2 
    &=  \frac{1}{N} \sum_{i=1}^N |\ub^T \Ab^{-T} \nabla f(\xb_i)|^2 \\
    &=  \ub^T \Ab^{-T} \bm{M} \Ab^{-1} \ub.
\end{align*}
The above display is $1$ for each unit vector $\ub \in \mathbb{R}^d$ if and only if $\bm{A}^{-T} \bm{M} \bm{A}^{-1} = \mathbf{I}$ and $\bm{M}^{1/2}\Ab^{-1}$ is an orthogonal matrix. 
Thus, (i) and (ii) are equivalent. 

Clearly, (ii) implies (iii).
Conversely, if $\lVert \bm{A} \bm{x} \rVert = \lVert \bm{M}^{1/2} \bm{x} \rVert$ for each $\xb \in \mathbb{R}^d$, it follows that 
\begin{equation*}
    \bm{M} = \Ab^T \Ab.
\end{equation*}
Consequently, if $\bm{A} = \bm{U} \bm{\Sigma} \bm{V}^T$
is a singular value decomposition, $\bm{M} = \bm{V} \bm{\Sigma}^2 \bm{V}^T$ is an eigenvalue decomposition and
\begin{equation}
    \bm{A} = \bm{Q} \bm{M}^{1/2}, \quad \text{where } \bm{Q} = \bm{U} \bm{V}^T \text{ is orthogonal.}
\end{equation}
This shows that (ii) and (iii) are equivalent.
\end{proof}

\bibliography{aipsamp}

\end{document}